\title{An exact degree for multivariate special polynomials}
\author{Rudolph Bronson Perkins}
\newcommand{\FF}{\mathbb{F}}
\newcommand{\pitilde}{\widetilde{\pi}}
\numberwithin{equation}{section}
\newtheorem{theorem}{\bf Theorem}[section]
\newtheorem{corollary}[theorem]{\bf Corollary}
\newtheorem{lemma}[theorem]{\bf Lemma}
\newtheorem{proposition}[theorem]{\bf Proposition}
\theoremstyle{definition}
\newtheorem{definition}[theorem]{\bf Definition}
\newtheorem{remark}[theorem]{\bf Remark}
\begin{document}

\address{Institut Camille Jordan, Université Claude Bernard Lyon 1, 43 boulevard du 11 novembre 1918, 69622 Villeurbanne cedex, France}
\email{perkins@math.univ-lyon1.fr}
\keywords{Special polynomials, function field arithmetic, Pellarin's $L$-series, positive characteristic}

\begin{abstract}
We introduce certain special polynomials in an arbitrary number of indeterminates over a finite field. These polynomials generalize the special polynomials associated to the Goss zeta function and Goss-Dirichlet $L$-functions over the ring of polynomials in one indeterminate over a finite field and also capture the special values at non-positive integers of $L$-series associated to Drinfeld modules over Tate algebras defined over the same ring.

We compute the exact degree in $t_0$ of these special polynomials and show that this degree is an invariant for a natural action of Goss' group of digit permutations. Finally, we characterize the vanishing of these multivariate special polynomials at $t_0=1$. This gives rise to a notion of trivial zeros for our polynomials generalizing that of the Goss zeta function mentioned above.
\end{abstract}

\maketitle

\section{Introduction}
\subsection{Notation}
Let $\FF_q$ be the finite field with $q$ elements of positive characteristic $p$, and let $\theta$ be an indeterminate. Our interest is in the ring $A := \FF_q[\theta]$. We denote by $A_+$ the set of monic polynomials in $A$, and for all $d \geq 0$ we write $A_+(d)$ for those elements of $A_+$ whose degree in $\theta$ equals $d$. We write $\overline\FF_q$ for the algebraic closure of $\FF_q$, and we fix an embedding of $\overline\FF_q$ into the algebraic closure of $\FF_q(\theta)$.

We shall say a $p$-adic integer $\beta$ is \textit{written in base} $q$ whenever we write $\beta = \sum_{i \geq 0} \beta_i q^i$ with $0 \leq \beta_i < q$ for all $i \geq 0$. We define the \textit{length} of a positive integer $\beta$, written in base $q$ as above, to be $l(\beta) := \sum_{i \geq 0}\beta_i$. Of course, note the dependence on $q$ that we omit. Finally, for a rational number $\alpha$, we let $\lfloor \alpha \rfloor \in \mathbb{Z}$ denote the greatest integer less than or equal to $\alpha.$

\subsection{Multivariate Special Polynomials}
The results of this note expand upon the author's work in \cite{Per1} and have been ported over from the author's dissertation \cite{PerDiss}. We study the \textit{multivariate special polynomials}, defined for non-negative integers $\beta_1,\dots,\beta_s$ first by the formal series
\[z(\beta_1,\dots,\beta_s,t_0) := \sum_{d \geq 0}t_0^d\sum_{a \in A_+(d)} \chi_{1}(a)^{\beta_1}\cdots\chi_{s}(a)^{\beta_s} \in \FF_q[t_1,\dots,t_s][[t_0]].\]
Here, for all $a \in A$ and $i = 1,\dots,s$, the symbols $\chi_i(a)$ stand for the images of the maps
\[\chi_i : A \rightarrow \FF_q[t_i] \subseteq \FF_q[t_1,\dots,t_s]\]
determined by $\theta \mapsto t_i$. It follows from our recursive formula, Prop. \ref{sprecursion}, that these power series are in fact polynomials in $t_0$.

\subsection{A Comment on Notation}
Goss points out to us that $s$ is traditionally the coordinate used on his ``complex plane'' $\mathbb{S}_\infty$ (see \cite[Chapter 8]{Gbook} for the definition). In keeping with the established notation so far in the theory of Pellarin's multivariate $L$-series, we will always use $s$ to denote a non-negative integer. As $\mathbb{S}_\infty$ does not appear in this paper, we do not expect any confusion to arise.

\subsection{Drinfeld modules over Tate algebras} 
In \cite{APTR14}, Angl\`es, Pellarin and Tavares-Ribeiro introduce the notion of Drinfeld modules over Tate algebras. The authors associate $L$-series to these new Drinfeld modules, generalizing the multivariate $L$-series studied in \cite{AngPel1}, \cite{GPel}, \cite{Pel13}, and \cite{Per2}. They mention in Example 4.1.3 that the recursive formula of this note (Prop. \ref{sprecursion}) implies that the negative special values of their $L$-series are finite $K$-linear combinations of elements that are algebraic over $\FF_q(t_0,t_1,\dots,t_s)$. Let us briefly describe the connection.

The most basic multivariate special polynomials considered here,
\[z(1,\overset{s}{\dots},1,t_0) := \sum_{d \geq 0} t_0^d \sum_{a \in A_+(d)} \chi_1(a)\cdots\chi_s(a),\]
are an example of the special value at zero of the $L$-series associated to the Drinfeld module over the $s$-variable Tate algebra with parameter (in the notation of \cite{APTR14}) $\alpha = t_0(t_1 - \theta)\cdots(t_s - \theta)$. As we will see in later sections, we may recover all the multivariate special polynomials considered in this note from $z(1,\overset{s}{\dots},1,t_0)$. We also observe that specializing the variables $t_i$ in the algebraic closure of $\FF_q(t_0,t_1,\dots,t_s)$ or in the set $\{\theta^{q^j}\}_{j \geq 0}$ returns the full gauntlet of $L$-series values at negative integers considered in \cite{APTR14}, and such specializations pose no convergence issues as we are simply considering polynomials in this note. One may also observe that the proof of Proposition \ref{sprecursion} below goes through word for word with the $t_i$ viewed in any $\FF_q$-algebra.


\subsection{Goss' Special Polynomials}
The following bit of history will help frame the results of this note. The special polynomials associated to $L$-series in positive characteristic are originally due to Goss, and we direct the reader to \cite[Chapter 8.12]{Gbook} for more information on their arithmetic interpretation. Briefly, one uses these special polynomials to give continuous-analytic continuation, in the sense of Goss, to their associated $L$-series at all finite places of $A$.

For non-negative integers $\beta$, the polynomials $z(\beta,t_0)$ are essentially the special polynomials associated to the Goss zeta function for $A$; the difference between the two is superficial and one re-obtains Goss' special polynomials from $z(\beta, t_0)$ through the change of variables $t_0 \mapsto x^{-1}$ and $t_1 \mapsto \theta$.  It is interesting to observe that the polynomials $z(\beta,t_0)$ may be obtained in many ways through specialization of the variables $t_i$ in the multivariate polynomials $z(\beta_1,\dots,\beta_s,t_0)$, and, perhaps counter-intuitively, such specializations - from several variables to one variable - will be a major tool in deducing properties about the multivariate special polynomials to follow.

The first basic result for $z(\beta,t_0)$ is due to Goss and follows easily from a recursive formula that we generalize in the next section. Of course, the reader should compare the following result with the vanishing of the Riemann zeta function at the negative even integers.

\begin{theorem}[Goss] \label{Gossthm}
The power series $z(\beta,t_0) \in \FF_q[t_1][[t_0]]$ is in fact a polynomial in $\FF_q[t_0,t_1]$. The polynomial vanishes at $t_0=1$ if and only if $\beta>0$ is such that $\beta \equiv 0 \mod (q-1)$ and in this case the zero is simple.
\end{theorem}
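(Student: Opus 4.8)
The plan is to reduce the statement to an elementary recursion in the single variable $\beta$ and then exploit the specialization $t_1 = 0$. First I would record the one-variable form of the recursion. For $d \ge 1$ the map $a \mapsto (\widetilde a, c)$, where $\widetilde a := (a - a(0))/\theta \in A_+(d-1)$ and $c := a(0) \in \FF_q$, is a bijection $A_+(d) \to A_+(d-1) \times \FF_q$ with $a = \theta\widetilde a + c$, whence $\chi_1(a) = t_1\chi_1(\widetilde a) + c$. Expanding by the binomial theorem and using $\sum_{c \in \FF_q} c^j = -1$ when $j \ge 1$ and $(q-1) \mid j$, and $\sum_{c \in \FF_q} c^j = 0$ otherwise, one gets
\[\sum_{a \in A_+(d)} \chi_1(a)^\beta \;=\; -\!\!\sum_{\substack{1 \le j \le \beta\\ (q-1)\mid j}}\!\! \binom{\beta}{j}\, t_1^{\beta-j}\!\! \sum_{a \in A_+(d-1)} \chi_1(a)^{\beta - j}\]
(the one-variable case of Proposition~\ref{sprecursion}). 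Multiplying by $t_0^d$, summing over $d \ge 1$, and pulling out one factor of $t_0$ gives the closed recursion
\[z(\beta, t_0) \;=\; 1 \;-\; t_0 \!\!\sum_{\substack{1 \le j \le \beta\\ (q-1)\mid j}}\!\! \binom{\beta}{j}\, t_1^{\beta - j}\, z(\beta - j, t_0),\]
in which every index $\beta - j$ is strictly smaller than $\beta$. Since the empty sum gives $z(\beta, t_0) = 1$ for $0 \le \beta < q-1$, an immediate induction on $\beta$ shows each $z(\beta, t_0)$ lies in $\FF_q[t_0, t_1]$ (and has $t_0$-degree at most $\lfloor \beta/(q-1)\rfloor$), which is the first assertion.

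For the zeros at $t_0 = 1$, I would first set $t_1 = 0$ in the closed recursion: only the index $j = \beta$ survives, so $z(\beta, t_0)|_{t_1 = 0}$ equals $1 - t_0$ when $\beta > 0$ and $(q-1)\mid\beta$, and equals $1$ in every other case. In particular, if $\beta = 0$ or $(q-1) \nmid \beta$, then $z(\beta, 1)|_{t_1 = 0} = 1 \ne 0$, so $z(\beta, 1)$ is a nonzero element of $\FF_q[t_1]$; this is the ``only if'' direction. For the ``if'' direction I would induct on $\beta$ ranging over the positive multiples of $q-1$: evaluating the closed recursion at $t_0 = 1$, every index $\beta - j$ occurring is again a multiple of $q-1$, so $z(\beta - j, 1) = 0$ by the inductive hypothesis whenever $\beta - j > 0$, while the single surviving index $j = \beta$ contributes $\binom{\beta}{\beta}\,z(0,1) = 1$; hence $z(\beta, 1) = 1 - 1 = 0$.

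It remains to see the zero is simple, i.e.\ that $(t_0 - 1)^2 \nmid z(\beta, t_0)$ in the UFD $\FF_q[t_1][t_0]$; since we already know $(t_0-1)\mid z(\beta,t_0)$, this is equivalent to $\partial z(\beta,t_0)/\partial t_0$ not vanishing at $t_0 = 1$. Differentiating the closed recursion in $t_0$, evaluating at $t_0 = 1$, and then setting $t_1 = 0$ annihilates every term except the one with $\beta - j = 0$, which contributes $-\binom{\beta}{\beta}\bigl(z(0,1) + \partial_{t_0}z(0,t_0)|_{t_0 = 1}\bigr) = -(1 + 0) = -1$, using $z(0,t_0) = 1$. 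Thus $\partial_{t_0}z(\beta,t_0)|_{t_0=1}$ is a polynomial in $t_1$ whose value at $t_1 = 0$ is $-1 \ne 0$, hence it is nonzero, and $t_0 = 1$ is a simple root.

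The main point --- rather than a computational obstacle --- is organizational: the vanishing half of the equivalence cannot be checked one exponent at a time but must be set up as an induction whose hypothesis is exactly the vanishing statement for all smaller positive multiples of $q-1$; and one must carry the already established polynomiality along so that evaluation and differentiation at $t_0 = 1$ are meaningful. The specialization $t_1 \mapsto 0$, which collapses each sum to its single term with $j = \beta$, is what makes the non-vanishing and the simplicity transparent.
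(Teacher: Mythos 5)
Your proof is correct. Note that the paper does not actually prove Theorem \ref{Gossthm}: it quotes it as a result of Goss, remarking only that it ``follows easily from a recursive formula that we generalize in the next section,'' and it later uses the statement as a black box (e.g.\ to get simplicity in Theorem \ref{thmvanchar} by specializing $t_i \mapsto t_1$). What you have written is a complete, self-contained proof along exactly the route the paper gestures at: the one-variable instance of Proposition \ref{sprecursion} (now with genuine binomial coefficients $\binom{\beta}{j}$ reduced mod $p$, which is harmless since the only term whose coefficient must be nonzero is $j=\beta$ with coefficient $1$), strong induction on $\beta$ for polynomiality and for the vanishing when $(q-1)\mid\beta$, and the specialization $t_1=0$, which collapses $z(\beta,t_0)|_{t_1=0}$ to $1-t_0$ or to $1$, to get both the non-vanishing when $\beta=0$ or $(q-1)\nmid\beta$ and, after differentiating in $t_0$, the simplicity. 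Your reduction of simplicity to $\partial_{t_0}z(\beta,t_0)|_{t_0=1}\neq 0$ is valid in characteristic $p$ precisely because you already know $(t_0-1)\mid z(\beta,t_0)$, and the evaluation of that derivative at $t_1=0$ is legitimate since $\partial_{t_0}$ commutes with the substitution $t_1=0$. This simplicity argument is in fact more elementary and more direct than anything appearing in the paper, which never establishes simplicity from scratch but always reduces to Goss's one-variable theorem; the cost is that your method is genuinely one-variable, whereas the paper's specialization technique lets it transport Goss's statement to the multivariate setting.
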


\begin{remark} 
The zeros of $z(\beta,t_0)$ at $t_0 = 1$, characterized above, are referred to as the \textit{trivial zeros} of the Goss zeta function. In the past, such trivial zeros have been viewed as arising from Euler factors at infinite primes of classical (characteristic zero valued) $L$-series through Goss' \textit{double-congruence}; we direct the reader to \cite[8.13]{Gbook} for full details, and to \cite{GPel} and \cite{Per1} where such zeros were first noticed and explored. In \cite{Gbook} (see Example 8.13.9 and the preceding paragraph), Goss asks for a method of handling the trivial zeros of $L$-series in positive characteristic without resorting to classical $L$-series. A partial answer to Goss' question is given by the recent results of Angl\`es and Pellarin, and we describe this in Section \ref{APTZsec}. As we will show, one may now see these zeros (but not their simplicity) as coming from ``gamma factors'' in positive characteristic. This new gamma factor is none other than Anderson and Thakur's function $\omega$, and we direct the reader to \cite{PelGamma} for comparisons between $\omega$ and the Euler $\Gamma$-function.
\end{remark}

The next result is stated in \cite[Theorem 1.2 (a)]{B2013}, where B\"ockle attributes it to J. Sheats. It will be a basic tool in our computation (see Theorem \ref{thm:exdegsp} below) of the degree in $t_0$ of $z(\beta_1,\dots,\beta_s,t_0)$.

\begin{theorem}[B\"ockle, Sheats]
For all positive integers $\beta$, the exact degree in $t_0$ of $z(\beta,t_0)$ equals $\min_{i \geq 0}\lfloor l(p^i \beta)/(q-1)\rfloor$.
\end{theorem}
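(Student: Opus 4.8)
The plan is to reduce everything to the coefficients of the power series. The coefficient of $t_0^d$ in $z(\beta,t_0)$ is the Carlitz power sum $S_d(\beta):=\sum_{a\in A_+(d)}\chi_1(a)^\beta\in\FF_q[t_1]$, and since $z(\beta,t_0)$ is a polynomial by Proposition~\ref{sprecursion}, the quantity to compute is simply $\max\{d\ge 0:S_d(\beta)\neq 0\}$. Writing each monic $a$ of degree $d$ as $\chi_1(a)=t_1^d+\sum_{i<d}c_it_1^i$ with $c_i\in\FF_q$, expanding $\chi_1(a)^\beta$ by the multinomial theorem, and summing over $(c_0,\dots,c_{d-1})\in\FF_q^d$ using that $\sum_{c\in\FF_q}c^e$ equals $-1$ when $e$ is a positive multiple of $q-1$ and $0$ otherwise, I obtain
\[S_d(\beta)=(-1)^d\sum t_1^{\,de_d+\sum_{i<d}ie_i}\binom{\beta}{e_d,e_0,\dots,e_{d-1}},\]
the sum taken over all tuples with $e_d\ge 0$, each $e_i$ ($0\le i<d$) a positive multiple of $q-1$, and $e_d+\sum_{i<d}e_i=\beta$. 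I would also record the Frobenius identity $S_d(p^j\beta)=S_d(\beta)^{p^j}$, so that $D(\beta):=\deg_{t_0}z(\beta,t_0)$ is invariant under $\beta\mapsto p^j\beta$.

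Next I would establish the upper bound. By Lucas's theorem the multinomial coefficient above is nonzero modulo $p$ precisely when the addition $e_d+e_0+\dots+e_{d-1}=\beta$ has no carries in base $p$; grouping base-$p$ digits into blocks of length $r$, where $q=p^r$, this forbids carries in base $q$ as well, whence $l(\beta)=l(e_d)+\sum_{i<d}l(e_i)$. A positive multiple of $q-1$ has base-$q$ digit sum congruent to $0$ modulo $q-1$ and hence at least $q-1$, so every surviving term forces $d(q-1)\le l(\beta)$. Therefore $D(\beta)\le\lfloor l(\beta)/(q-1)\rfloor$, and applying this to each $p^j\beta$ together with the Frobenius identity gives $D(\beta)\le\min_{j\ge 0}\lfloor l(p^j\beta)/(q-1)\rfloor$.

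It remains to prove the matching lower bound. Set $D:=\min_{j\ge 0}\lfloor l(p^j\beta)/(q-1)\rfloor$; using the Frobenius identity I may replace $\beta$ by an orbit representative with $l(\beta)$ minimal, so that $D=\lfloor l(\beta)/(q-1)\rfloor$ and $l(\beta)=\min_{j\ge 0}l(p^j\beta)$. It is then enough to exhibit a single monomial $t_1^N$ whose coefficient in $S_D(\beta)$ is nonzero. Following Sheats, I would single out the \emph{optimal} admissible tuple $(e_D,e_0,\dots,e_{D-1})$---constructed by a greedy procedure on the digits of $\beta$ so as to maximize $N:=De_D+\sum_{i<D}ie_i$ among all tuples contributing to $S_D(\beta)$---first verifying that the orbit-minimality of $l(\beta)$ is exactly what permits the procedure to carve out $D$ parts, each a positive multiple of $q-1$ with no base-$p$ carry in their sum, and then proving that this optimal tuple is the \emph{unique} admissible tuple attaining the exponent $N$. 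Its multinomial coefficient, being nonzero modulo $p$, is then the leading coefficient of $S_D(\beta)$, so $S_D(\beta)\neq 0$ and $D(\beta)\ge D$; with the upper bound this finishes the proof.

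The hard part will be the uniqueness assertion in the last step: a priori several distinct admissible tuples may contribute to the top power of $t_1$, and one must rule out cancellation among their multinomial coefficients. This non-cancellation is an exchange/optimality argument on digit decompositions, delicate precisely because the ``positive multiple of $q-1$'' constraint and the base-$p$ no-carry constraint interact---which is also the reason the final answer is phrased through base-$q$ digit sums of the various base-$p$ shifts $p^j\beta$ rather than of $\beta$ alone---and it is the technical core of the Sheats argument; by contrast the first two steps are essentially bookkeeping.
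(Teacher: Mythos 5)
Your upper bound is correct and complete: expanding $\chi_1(a)^\beta$ for $a=t_1^d+\sum_{i<d}c_it_1^i$, using $\sum_{c\in\FF_q}c^e=-1$ exactly for $e$ a positive multiple of $q-1$ (and $0$ otherwise, including $e=0$), invoking Lucas, passing from no base-$p$ carries to no base-$q$ carries, and twisting by Frobenius is a clean, self-contained derivation of $\deg_{t_0}z(\beta,t_0)\le\min_{i\ge 0}\lfloor l(p^i\beta)/(q-1)\rfloor$. It is a different route from the one this paper gestures at (the paper obtains the upper bound by specializing the multivariate recursion of Proposition \ref{sprecursion} via Corollary \ref{spcor1} and the digit-splitting substitution of Section \ref{pfexdegsp}), but both are elementary and the coefficient formula you derive is the standard one.

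The genuine gap is the lower bound, which is the entire content of the theorem. You correctly reduce it to exhibiting one monomial of $S_D(\beta)$ whose multinomial coefficients do not cancel, but the two steps you then defer --- (i) that orbit-minimality of $l(\beta)$ guarantees the \emph{existence} of a carry-free decomposition of $\beta$ into $D$ parts each a positive multiple of $q-1$ (non-obvious when $q$ is not prime, because a sub-selection of base-$p$ digits does not contribute additively to the base-$q$ digit sum), and (ii) that the greedy ``optimal'' tuple is the \emph{unique} admissible tuple attaining the top exponent $N$ --- are precisely Sheats' theorem; ``following Sheats'' at that point is not a proof but a citation in disguise. Note that the paper itself does not prove this statement either: it imports it wholesale from B\"ockle \cite[Theorem 1.2 (a)]{B2013}, who attributes it to Sheats, and explicitly warns that the lower bound ``causes more difficulty.'' So your write-up is acceptable as a reduction to a cited result, provided you state the precise lemma of Sheats you are invoking; as a self-contained argument it has a hole exactly where the difficulty is concentrated.
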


Obtaining the upper-bound in the theorem above is actually quite easy, and this will follow from our recursive formula proved in the next section. Proving the lower bound causes more difficulty, and one employs Sheats' results, as in \cite{B2013}, to do this.

\subsection{Goss' Group of Digit Permutations}
In \cite{Gdigit}, D. Goss introduced, for each $q$ as above, a natural group $S_{(q)}$ of \textit{digit permutations} which act as homeomorphisms of the $p$-adic integers $\mathbb{Z}_p$ (considered with their usual topology). To define the group $S_{(q)}$, one begins by writing each element $\beta \in \mathbb{Z}_p$ uniquely in base $q$ as $\beta = \sum_{i \geq 0}\beta_i q^i$. Then to each permutation $\rho$ of the non-negative integers, one associates a permutation $\rho_*$ of $\mathbb{Z}_p$ through the definition
\[\rho_*\left(\sum_{i \geq 0}\beta_i q^i\right) := \sum_{i \geq 0} \beta_i q^{\rho(i)} \text{ for all } \beta \in \mathbb{Z}_p.\]

\begin{definition}
Goss' group $S_{(q)}$ of digit permutations is the collection of $\mathbb{Z}_p$-homeomorphisms $\rho_*$ arising as $\rho$ ranges over all permutations of the non-negative integers.
\end{definition}

We direct the reader to \cite[Section 6.2]{Gdigit} for the interesting history of how Goss first observed that the degree in $t_0$ of $z(\beta,t_0)$ is an invariant of the natural action of $S_{(q)}$ given by $\rho_*(z(\beta,t_0)) := z(\rho_*\beta,t_0)$. Given the B\"ockle-Sheats' result above, this invariance of degree is now quite easy to see. We will use our exact degree in Theorem \ref{thm:exdegsp} to extend Goss' observation on invariance of degrees to the special polynomials in several variables introduced above (see Theorem \ref{invgen}).

\subsection{On the Overlap with Angl\`es-Pellarin}
In \cite[Section 3]{AngPel1}, similar results are considered as a prerequisite for the proof of rigid-analyticity in the variables $t_1,\dots,t_s$ of Pellarin's series in $s$ indeterminates. Outside of the method of specialization to Goss zeta values and $L$-values, which is a major theme in the life of Pellarin's series anyway, our methods of proof are different. Further, our results concern the special polynomials associated to Goss' extension of Pellarin's series and are slightly finer in general than those of \cite{AngPel1}. Also note that \cite[Lemma 31]{AngPel1} is our Theorem \ref{thmvanchar} minus simplicity of the zero at $t_0 = 1$.

\section{Recursive Nature}
In this section we focus on the special polynomials
\[z(1,\overset{s}{\dots},1,t_0) = \sum_{d \geq 0} t_0^d \sum_{a \in A_+(d)} \chi_{1}(a)\cdots\chi_{s}(a).\] The following recursive formula will be our major tool.

\begin{proposition}\label{sprecursion}
Let $s$ be a non-negative integer. The following recursive formula holds: \newline $z(1,\overset{0}{\dots},1,t_0) = 1$, and for all $s \geq 1$,
\begin{equation}
z(1,\overset{s}{\dots},1,t_0) = 1 - t_0\sum_{\substack{(i_1,\dots,i_s) \in \{0,1\}^s \\ 0 \neq \sum_j i_j \equiv 0 \mod (q-1)}} t_1^{1 - i_1}\cdots t_s^{1-i_s} z(1 - i_1, 1 - i_2,\dots,1 - i_s,t_0).
\end{equation}
\begin{proof}
The equality $z(1,\overset{0}{\dots},1,t_0) = 1$ is obvious.

Now the proof follows in the same elementary way as Goss' original recursive formula for the special polynomials associated to his zeta function; that is,  for $a \in A_+(d)$ one writes $\chi_{i}(a)$ uniquely as $t_i \chi_{i}(b) + \lambda$ with $b \in A_+(d-1)$ and $\lambda \in \FF_q$ and expands as we do now.

Assume $s \geq 1$. Then $z(1,\overset{s}{\dots},1,t_0)$ equals
\begin{eqnarray*}
&&1+\sum_{d \geq 1} t_0^d \sum_{b \in A_+(d-1)} \sum_{\lambda \in \FF_q} \prod_{i = 1}^s \sum_{j_i = 0}^1 (t_i \chi_{i}(b))^{1-j_i} \lambda^{j_i} = \\
&&= 1+\sum_{d \geq 1} t_0^d \sum_{b \in A_+(d-1)} \prod_{i = 1}^s t_i \chi_{i}(b) + \\
&& + \sum_{d \geq 1} t_0^d \sum_{b \in A_+(d-1)} \sum_{\lambda \in \FF_q^\times} \prod_{i = 1}^s \sum_{j_i = 0}^1 (t_i \chi_{i}(b))^{1-j_i} \lambda^{j_i} \\
&(*)&= 1+\sum_{d \geq 1} t_0^d \sum_{b \in A_+(d-1)} \prod_{i = 1}^s t_i \chi_{i}(b) + \\
&(**)& + \sum_{(j_i) \in \{0,1\}^s} \left(\prod_{i = 1}^s t_i^{1-j_i} \right)\sum_{d \geq 1} t_0^d\sum_{b \in A_+(d-1)} \left(\prod_{i = 1}^s \chi_{i}(b)^{1-j_i}\right) \sum_{\lambda \in \FF_q^\times} \lambda^{\sum_{i = 1}^s j_i}.
\end{eqnarray*}

Using the fact that $\sum \lambda^{\sum j_i} = -1$ if $\sum j_i \equiv 0 \mod (q-1)$ and equals zero otherwise, and taking account of the cancellation of $\sum_{d \geq 1} t_0^d \sum_{b \in A_+(d-1)} \prod_{i = 1}^s t_i \chi_{i}(b)$ in $(*)$ and the term arising from the $s$-tuple of all zeros in $(**)$ we deduce the claim.
\end{proof}
\end{proposition}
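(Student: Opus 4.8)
The plan is a direct computation in two parts. For the base case $s = 0$ the inner sum $\sum_{a \in A_+(d)} 1$ is the number of monic polynomials of degree $d$, namely $q^d$; this equals $1$ for $d = 0$ and vanishes in $\FF_q$ for $d \geq 1$ since $p \mid q$, so only the constant term survives and $z(1,\overset{0}{\dots},1,t_0) = 1$. For $s \geq 1$ I would exploit the Euclidean structure of $A$: each $a \in A_+(d)$ with $d \geq 1$ is uniquely $a = \theta b + \lambda$ with $b = (a - a(0))/\theta \in A_+(d-1)$ and $\lambda = a(0) \in \FF_q$, and $a \mapsto (b, \lambda)$ is a bijection between $A_+(d)$ and $A_+(d-1) \times \FF_q$. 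Applying $\chi_i$ turns this into $\chi_i(a) = t_i \chi_i(b) + \lambda$, which is exactly the relation that drives the recursion.

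Substituting, one gets $z(1,\overset{s}{\dots},1,t_0) = 1 + \sum_{d \geq 1} t_0^d \sum_{b \in A_+(d-1)} \sum_{\lambda \in \FF_q} \prod_{i=1}^s (t_i\chi_i(b) + \lambda)$. The next step is to expand $\prod_{i=1}^s (t_i\chi_i(b) + \lambda) = \sum_{(i_1,\dots,i_s) \in \{0,1\}^s} \big(\prod_{j} (t_j\chi_j(b))^{1-i_j}\big)\lambda^{\sum_j i_j}$ and perform the sum over $\lambda$ term by term, using the elementary identity that $\sum_{\lambda \in \FF_q} \lambda^m$ equals $-1$ when $m$ is a positive multiple of $q - 1$ and $0$ otherwise. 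This removes the all-zeros multi-index (whose $\lambda$-sum has $q \equiv 0$ terms, and which anyway contributes $\sum_{b}\prod_i t_i\chi_i(b)$) and leaves exactly the multi-indices with $0 \neq \sum_j i_j \equiv 0 \bmod (q-1)$. After pulling the $\prod_j t_j^{1-i_j}$ out front and reindexing the sum on $d$ by $d \mapsto d - 1$, the residual double sum becomes $t_0 \, z(1 - i_1, \dots, 1 - i_s, t_0)$, which is the asserted formula.

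I expect no genuine obstacle; the argument is bookkeeping. The one place to stay alert is distinguishing the two mechanisms of vanishing modulo $p$ --- the factor $q$ arising whenever $\lambda$ runs over all of $\FF_q$ against a zero exponent (which disposes of the ``main term'' $\sum_b \prod_i t_i\chi_i(b)$), versus the value $-1$ of the power sum for exponents that are positive multiples of $q - 1$. It is also worth recording that, because $\sum_j i_j \neq 0$ forces at least one $i_j = 1$, each $z(1 - i_1, \dots, 1 - i_s, t_0)$ on the right genuinely involves fewer of the variables $t_1, \dots, t_s$ than the left; this makes the recursion terminate and, by induction on $s$, yields the polynomiality in $t_0$ claimed after the statement.
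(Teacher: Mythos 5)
Your proposal is correct and follows essentially the same route as the paper: the unique decomposition $a=\theta b+\lambda$ giving $\chi_i(a)=t_i\chi_i(b)+\lambda$, expansion of the product over multi-indices in $\{0,1\}^s$, and the power-sum identity over $\FF_q$ to isolate the multi-indices with $0\neq\sum_j i_j\equiv 0 \bmod (q-1)$. The only (immaterial) difference is bookkeeping: you sum $\lambda$ over all of $\FF_q$ at once, letting the factor $q\equiv 0$ kill the all-zeros term, whereas the paper first splits off $\lambda=0$ and then cancels that term against the all-zeros contribution from the sum over $\FF_q^\times$.
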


\begin{corollary}\label{spcor1}
For all non-negative integers $s$, the degree in $t_0$ of $z(1,\overset{s}{\dots},1,t_0)$ is at most $\lfloor s / (q-1) \rfloor$.
\begin{proof}
The proof proceeds by strong induction on $s$. By the recursive formula we see immediately that the claim is true for $0 \leq s < q-1$. Now let $s \geq q-1$, and suppose the claim holds for all $0 \leq j < s$. By the recursive formula and our inductive hypothesis, the highest power of $t_0$ appearing in $z(1,\overset{s}{\dots},1,t_0)$ comes from any $z(1-i_1,1-i_2,\dots,1-i_s,t_0)$ such that exactly $q-1$ of the $i_l$ are equal to $1$, and again by our inductive hypothesis the degree of this latter special polynomial is then $\left\lfloor \frac{s - (q-1)}{q-1} \right\rfloor = \lfloor s/(q-1) \rfloor - 1$. The extra $t_0$ appearing in the recursive formula shows that the degree of $z(1,\overset{s}{\dots},1,t_0)$ is at most $\lfloor s/(q-1) \rfloor$.
\end{proof}
\end{corollary}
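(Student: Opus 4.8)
The plan is to prove the bound by strong induction on $s$, with Proposition \ref{sprecursion} as the only ingredient.

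First I would settle the base cases, that is, all $s$ with $0 \leq s < q-1$. For such $s$ the index set of the sum on the right-hand side of the recursion is empty: for any $(i_1,\dots,i_s) \in \{0,1\}^s$ we have $0 \leq \sum_j i_j \leq s < q-1$, so the only multiple of $q-1$ that $\sum_j i_j$ can equal is $0$, which is excluded by the condition $\sum_j i_j \neq 0$. Hence $z(1,\overset{s}{\dots},1,t_0) = 1$, a polynomial of degree $0 = \lfloor s/(q-1)\rfloor$ in $t_0$.

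For the inductive step I would fix $s \geq q-1$, assume the bound for all smaller values, and inspect the recursion. Every summand on the right-hand side has the form $t_0 \cdot \mu \cdot z(1-i_1,\dots,1-i_s,t_0)$ with $\mu$ a monomial in $t_1,\dots,t_s$ and $k := \sum_j i_j$ subject to $0 < k \equiv 0 \mod (q-1)$ and $k \leq s$, whence $k \geq q-1$. The step that needs a word of justification is the observation that $z(1-i_1,\dots,1-i_s,t_0)$ is really a special polynomial in fewer variables: a coordinate $j$ with $i_j = 1$ contributes the exponent $0$, hence the factor $\chi_j(a)^0 = 1$, so after relabeling $z(1-i_1,\dots,1-i_s,t_0) = z(1,\overset{s-k}{\dots},1,t_0)$ as far as the $t_0$-degree is concerned. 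Since $s-k \leq s-(q-1) < s$, the inductive hypothesis gives $\deg_{t_0} z(1-i_1,\dots,1-i_s,t_0) \leq \lfloor (s-k)/(q-1) \rfloor \leq \lfloor (s-(q-1))/(q-1) \rfloor = \lfloor s/(q-1) \rfloor - 1$; the extra factor $t_0$ then raises this by one, giving each summand degree at most $\lfloor s/(q-1) \rfloor$ in $t_0$, while the constant term $1$ contributes degree $0$. This is exactly the asserted bound.

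I expect no genuine obstacle here; the one point deserving care is the reduction in the inductive step --- identifying each $z(1-i_1,\dots,1-i_s,t_0)$ with a special polynomial in $s-k$ variables, so that the inductive hypothesis applies, together with the elementary remark that the extreme case $k = q-1$ is precisely the one that matches the target bound once the leading $t_0$ in the recursion is accounted for.
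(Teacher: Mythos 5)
Your proposal is correct and follows essentially the same strong-induction-on-$s$ argument as the paper, using Proposition \ref{sprecursion} in exactly the same way; you are merely a bit more explicit about the base case (empty index set) and about bounding every summand $k \geq q-1$ rather than only the extremal case $k = q-1$ singled out in the paper. No substantive difference.
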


\begin{remark}
We will see in Theorem \ref{thm:exdegsp} below that the degree in $t_0$ of $z(1,\overset{s}{\dots},1,t_0)$ actually \textit{equals} $\lfloor s / (q-1) \rfloor$.
\end{remark}

\section{Exact degree}\label{exdegsec}
The following theorem is the main result of this note.

\begin{theorem} \label{thm:exdegsp}
For any $s$ positive integers $\beta_1,\dots,\beta_s$, the exact degree in $t_0$ of the special polynomial
\[z(\beta_1,\dots,\beta_s,t_0) := \sum_{d \geq 0} t_0^d \sum_{a \in A_+(d)} \chi_{1}(a)^{\beta_1}\cdots\chi_{s}(a)^{\beta_s}\]
equals
\begin{equation} \label{degreeeqn}
\phi(\beta_1,\cdots,\beta_s) := \min_{i \geq 0} \left \lfloor \frac{l(p^i \beta_1) + \cdots + l(p^i \beta_s)}{q-1}  \right \rfloor .
\end{equation}
\end{theorem}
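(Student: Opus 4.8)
The plan is to handle the upper and lower bounds on the degree in $t_0$ separately, reducing both to the one-variable theorem of B\"ockle--Sheats via specialization of the variables $t_i$. First I would establish the reduction that lets the several-variable problem be recovered from $z(1,\overset{s}{\dots},1,t_0)$: by grouping the $s$ chains of variables into $s$ blocks of sizes $\beta_1,\dots,\beta_s$ and then specializing within each block (sending the variables of block $j$ to a common variable, say $t_j$), the polynomial $z(1,\overset{\beta_1+\cdots+\beta_s}{\dots},1,t_0)$ specializes to $z(\beta_1,\dots,\beta_s,t_0)$. Degrees in $t_0$ can only drop under specialization, so from Corollary \ref{spcor1} one immediately gets the upper bound
\[
\deg_{t_0} z(\beta_1,\dots,\beta_s,t_0) \;\leq\; \left\lfloor \frac{\beta_1+\cdots+\beta_s}{q-1}\right\rfloor .
\]
To get the full upper bound $\phi(\beta_1,\dots,\beta_s)$, I would exploit the Frobenius twist: raising $a\in A_+(d)$ to the $p$-th power is a degree-scaling bijection of $A_+(d)$ with (a subset related to) $A_+(pd)$, and at the level of power series it shows that $z(\beta_1,\dots,\beta_s,t_0)$ and $z(p\beta_1,\dots,p\beta_s,t_0^{1/p})$ are related in a degree-preserving way (more precisely, the coefficient of $t_0^d$ in the former becomes, after a $p$-th power map on coefficients, the coefficient of $t_0^{pd}$ in the latter). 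Hence $\deg_{t_0} z(\beta_1,\dots,\beta_s,t_0) = \deg_{t_0} z(p\beta_1,\dots,p\beta_s,t_0)$, and applying the crude bound above to each twist $(p^i\beta_1,\dots,p^i\beta_s)$ and taking the minimum over $i$ yields $\deg_{t_0} z(\beta_1,\dots,\beta_s,t_0) \leq \phi(\beta_1,\dots,\beta_s)$.

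For the lower bound I would again specialize, but this time down to a single variable rather than down to $s$ variables: choose a specialization of all the $t_i$ into one new indeterminate $u$ (for instance $t_i\mapsto u$ for every $i$) that sends $z(\beta_1,\dots,\beta_s,t_0)$ to $z(\beta_1+\cdots+\beta_s, t_0)$ up to relabeling; more carefully, since $\chi_i(a)$ for different $i$ are "the same polynomial in different variables," the monomial $\chi_1(a)^{\beta_1}\cdots\chi_s(a)^{\beta_s}$ collapses to $\chi(a)^{\beta_1+\cdots+\beta_s}$ under $t_i\mapsto u$. By the B\"ockle--Sheats theorem the degree in $t_0$ of $z(\beta_1+\cdots+\beta_s,t_0)$ is $\min_{i\geq 0}\lfloor l(p^i(\beta_1+\cdots+\beta_s))/(q-1)\rfloor$. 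Unfortunately this is in general \emph{smaller} than $\phi(\beta_1,\dots,\beta_s)$, because $l(p^i\beta_1)+\cdots+l(p^i\beta_s) \geq l(p^i(\beta_1+\cdots+\beta_s))$ with strict inequality whenever there are carries; so a single crude specialization does not suffice, and this is the crux of the argument.

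The hard part, therefore, is the lower bound, and the right fix is to specialize into \emph{several} independent variables while separating the base-$q$ digits of the $\beta_j$ so that no carries occur. Concretely: for a fixed $i$, write each $p^i\beta_j$ in base $q$; the exponent $p^i\beta_j$ in the variable $t_j$ "distributes" over the digits, and one can choose a specialization $t_j \mapsto$ (a monomial in a family of independent variables $u_0,u_1,\dots$, one $u_k$ per base-$q$ place) so that the resulting one-variable-in-$u$-collapse, or rather a reduction to $z(N,t_0)$ with $N$ a single integer of length exactly $l(p^i\beta_1)+\cdots+l(p^i\beta_s)$ and no internal carries, is forced. The key combinatorial input is that for the $i$ realizing the minimum in $\phi$, one can select an integer $N$ with $l(N) = l(p^i\beta_1)+\cdots+l(p^i\beta_s)$ and with $l(p^m N)\geq l(N)$ for all $m$ (so that B\"ockle--Sheats gives degree exactly $l(N)/(q-1)$ rounded down for this $N$) and such that a legitimate specialization of the $t_1,\dots,t_s$ carries $z(\beta_1,\dots,\beta_s,t_0)$ onto $z(N,t_0)$ with control of degrees; since specialization cannot increase the $t_0$-degree, we conclude $\deg_{t_0} z(\beta_1,\dots,\beta_s,t_0) \geq \lfloor l(N)/(q-1)\rfloor = \phi(\beta_1,\dots,\beta_s)$. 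Combined with the upper bound this proves the theorem. I would expect the technical heart to be verifying that such a digit-separating specialization genuinely exists and lands on the desired $z(N,t_0)$ — essentially a bookkeeping lemma about how the maps $\chi_i$ interact with raising to powers and with collapsing variables — together with checking the "no bad twists" condition $l(p^mN)\geq l(N)$ for the optimal $N$, for which Sheats' non-increasing-length phenomenon at the optimal twist of $\beta$ is the model to imitate.
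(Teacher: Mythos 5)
Your overall strategy --- reduce to the one-variable B\"ockle--Sheats theorem by specializing the variables of $z(1,\overset{n}{\dots},1,t_0)$, using Frobenius twists and the fact that specialization can only lower the degree in $t_0$ --- is the paper's, and your lower-bound sketch is essentially the paper's argument: writing $q=p^e$, one separates the $\beta_j$ into disjoint base-$q$ ranges by sending $t_j\mapsto t_1^{q^{m_{j-1}}}$ with $m_0=0$ and $q^{m_j}>q^{m_{j-1}}p^{e-1}\beta_j$, so that $N:=\beta_1+q^{m_1}\beta_2+\cdots+q^{m_{s-1}}\beta_s$ satisfies $l(p^iN)=l(p^i\beta_1)+\cdots+l(p^i\beta_s)$ for $i=0,\dots,e-1$, whence $\min_{i}\lfloor l(p^iN)/(q-1)\rfloor=\phi(\beta_1,\dots,\beta_s)$ on the nose. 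Your additional requirement that $l(p^mN)\geq l(N)$ for all $m$ is not needed (and would be awkward to arrange): one does not need the minimum for $N$ to occur at $m=0$, only that the minimum over all twists of $N$ coincides with $\phi$, which the no-carry property for all $i<e$ already guarantees.

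The genuine gap is in your upper bound. Collapsing $\beta_j$ fresh variables onto $t_j$ gives only $\deg_{t_0}\leq\lfloor(\beta_1+\cdots+\beta_s)/(q-1)\rfloor$, and applying this crude bound to the twists $(p^i\beta_1,\dots,p^i\beta_s)$ yields $\min_{i\geq 0}\lfloor p^i(\beta_1+\cdots+\beta_s)/(q-1)\rfloor$, whose minimum is attained at $i=0$: you recover only $\lfloor(\beta_1+\cdots+\beta_s)/(q-1)\rfloor$, which is in general strictly larger than $\phi$ (take $q=p=3$, $s=1$, $\beta_1=9$: your bound is $4$, while $\phi(9)=0$). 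What is missing is the digit-splitting specialization the paper uses: since $\chi(a)^{q^k}=a(t^{q^k})$, writing $\beta=\sum_k b_kq^k$ in base $q$ one obtains $z(\beta,t_0)$ from $z(1,\overset{l(\beta)}{\dots},1,t_0)$ --- only $l(\beta)$ variables, not $\beta$ of them --- by sending $b_k$ of the variables to $t^{q^k}$ for each $k$. This gives $\deg_{t_0}z(\beta_1,\dots,\beta_s,t_0)\leq\lfloor(l(\beta_1)+\cdots+l(\beta_s))/(q-1)\rfloor$ directly from Corollary \ref{spcor1}, and only then does the relation $z(\beta_1,\dots,\beta_s,t_0)^{p^i}=z(p^i\beta_1,\dots,p^i\beta_s,t_0^{p^i})$ deliver the minimum over $i$, i.e.\ the bound $\phi$. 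You do invoke digit-distribution later, for the lower bound, where what is actually needed is the shift by powers $q^{m_j}$ rather than the splitting into $l(\beta_j)$ factors; the two devices should be swapped.
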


The proof may be found in Section \ref{pfexdegsp} below. First, we compare our result with the classical theory.

\subsection{Similarities with Classical $L$-series}
The non-trivial Dirichlet characters on $A$ may be obtained in the following fashion: they are the evaluation maps $\chi$ from $A$ into the algebraic closure of $\FF_q$ defined for all $a \in A$ by
\[\chi(a) := a(\lambda_1)^{\beta_1}\cdots a(\lambda_s)^{\beta_s},\]
where the $\lambda_i$ are roots of distinct monic irreducible polynomials $P_i \in A$ and the $\beta_i$ are positive integers chosen in an appropriate range. The details are easily worked out by the reader. Thus specializing $z(\beta_1,\dots,\beta_{s+1},t_0)$ so that $t_{s+1} = \theta$ and the remaining $t_i = \lambda_i$, for $i = 1,\dots, s$ as above, we obtain the $\beta_{s+1}$-th special polynomial for the Goss-Dirichlet $L$-function associated to the character $\chi$.

The growth rates for sum of digit function $l(\cdot)$ and the base $q$ logarithm $\log_q(\cdot)$ may be easily related - for example, through the simple inequality \begin{equation}\label{digittolog} l(r) \leq (\log_q(r)+1)(q-1),\end{equation} which holds for all positive integers $r$ - and the formula \eqref{degreeeqn} stated above for the degree in $t_0$ of $z(\beta_1,\dots,\beta_s,t_0)$ is strikingly similar to the classical formula for $N(T,\chi)/T$, which we now quickly recall.

For a primitive, complex-valued character $\chi$ of modulus $m$, one sets $N(T,\chi)$ to be the number of zeros of $L(\sigma + it,\chi)$ in the critical strip $0<\sigma<1$ such that $|t| < T$. It is well known that the ``average number of zeros'' $N(T,\chi)/T$ grows like a constant multiple of $\log(T) + \log(m)$. This is quite reminiscent of what follows for the multivariate special polynomials in Theorem \ref{thm:exdegsp} after replacing the sum of digits functions with the upper bound given by $\log_q$ in \eqref{digittolog}.

It should be noted that after specialization of $t_1,\dots,t_s$ at roots of unity (or any algebraic elements), the degree of our special polynomials may decrease. This topic needs further exploration.

\subsection{The proof of Theorem \ref{thm:exdegsp}} \label{pfexdegsp}
In this section and the next we shall use the following obvious lemma without comment.

\begin{lemma}
If $j,k$ are two positive integers such that there is no base $q$ carry-over in the sum $j+k$, then $l(j+k) = l(j)+l(k)$. \hfill $\qed$
\end{lemma}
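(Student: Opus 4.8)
The plan is to unwind the definition of the length function $l(\cdot)$ in terms of base $q$ expansions; the lemma is just the assertion that $l(\cdot)$ is additive across a carry-free sum. Write $j = \sum_{i \geq 0} j_i q^i$ and $k = \sum_{i \geq 0} k_i q^i$ in base $q$, so that $0 \leq j_i, k_i < q$ for all $i$, with only finitely many digits nonzero since $j$ and $k$ are positive integers. By definition $l(j) = \sum_{i \geq 0} j_i$ and $l(k) = \sum_{i \geq 0} k_i$. The hypothesis that there is no base $q$ carry-over in the sum $j + k$ means precisely that $j_i + k_i < q$ for every $i \geq 0$.

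First I would observe that under this hypothesis the digit-wise sum $\sum_{i \geq 0} (j_i + k_i) q^i$ is already a valid base $q$ expansion, since each coefficient $j_i + k_i$ lies in $\{0, 1, \dots, q-1\}$. By the uniqueness of base $q$ expansions this must be the base $q$ expansion of $j + k$, so that the $i$-th base $q$ digit of $j + k$ equals $j_i + k_i$ for each $i$. Summing the digits then gives
\[ l(j+k) = \sum_{i \geq 0}(j_i + k_i) = \sum_{i \geq 0} j_i + \sum_{i \geq 0} k_i = l(j) + l(k), \]
which is the claim.

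I expect no real obstacle here: the only step deserving a word is the appeal to uniqueness of base $q$ representations, which is what licenses reading off the digits of $j+k$ directly from the digit-wise sum, together with the rearrangement of the resulting finite sum of digits into the two separate sums. Convergence never enters because $j$ and $k$ are positive integers and the expansions are finite.
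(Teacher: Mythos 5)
Your proof is correct, and it is exactly the standard digit-wise argument the paper has in mind: the paper states this lemma as obvious and gives no proof at all, so you have simply supplied the routine details (uniqueness of base $q$ expansions applied to the carry-free digit-wise sum). Nothing further is needed.
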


\begin{proof}[Proof of Theorem \ref{thm:exdegsp}]
As a first step, we demonstrate how to obtain the upper-bound from Cor. \ref{spcor1}. Let $\beta = \sum_{i= 0}^r b_i q^i$ be a positive integer written in base $q$ with $b_r \neq 0$. Observe that we may obtain $z(\beta,t_0)$ from $z(1,\overset{l(\beta)}{\dots},1,t_0)$ by making the following substitutions:
\begin{eqnarray*}
t_1, t_2, \dots, t_{b_0} \mapsto t_1, \\
t_{b_0+1}, t_{b_0 + 2}, \dots, t_{b_0+b_1} \mapsto t_1^q, \\
\vdots \ \\
t_{b_0+b_1+\cdots+ b_{r-1}+1}, t_{b_0+b_1+\cdots+ b_{r-1}+2}, \dots, t_{b_0+b_1+\cdots+ b_{r-1}+b_r} \mapsto t_1^{q^r}.
\end{eqnarray*}
For positive integers $\beta_1,\dots,\beta_s$, one obtains $z(\beta_1,\dots,\beta_s,t_0)$ from $z(1,\overset{l(\beta_1)+l(\beta_2)+\cdots+l(\beta_s)}{\dots},1,t_0)$ similarly.


Now because of the relation $z(\beta_1,\dots,\beta_s,t_0)^{p^i} = z(p^i\beta_1,\dots,p^i\beta_s,t_0^{p^i})$, which holds for all integers $i \geq 0$, we see that the degrees in $t_0$ of $z(\beta_1,\dots,\beta_s,t_0)$ and $z(p^i\beta_1,\dots,p^i\beta_s,t_0)$ are equal. Using the fact that $z(p^i\beta_1,\dots,p^i\beta_s,t_0)$ may be obtained from $z(1,\overset{l(p^i \beta_1) + \dots + l(p^i \beta_s)}{\dots},1,t_0)$, as in the last paragraph, the upper-bound follows.

It remains to argue that the degree cannot be lower, and this is done by a careful choice of specialization to the one variable case; specialization can only decrease the degree in $t_0$.
Let $e = \log_p(q)$. Let $m_0 = 0$, and recursively choose a sequence of positive integers $m_1,\dots, m_{s-1}$ such that $q^{m_j} > q^{m_{j-1}} p^{e-1} \beta_j$. Then for $i = 0,\dots,e-1$ there is no base $q$ carry-over in the sum $p^i \beta_1 + p^i q^{m_1} \beta_2 + \cdots + p^i q^{m_{s-1}} \beta_{s}.$
Hence for these $i$,
\[ l(p^i \beta_1) + l(p^i \beta_2) + \cdots + l(p^i \beta_{s}) = l(p^i (\beta_1 + q^{m_1} \beta_2 + \cdots + q^{m_{s-1}} \beta_{s})).\]
Thus
\[\phi(\beta_1,\dots,\beta_s) = \min_{i \geq 0} \left\lfloor \frac{l(p^i (\beta_1 + q^{m_1} \beta_2 + \cdots + q^{m_{s-1}} \beta_{s}))}{q-1}  \right\rfloor.\]
Now, by B\"ockle \cite[Theorem 1.2 (a)]{B2013}, the exact degree in $t_0$ of the non-zero polynomial
\[ z(\beta_1 + q^{m_1} \beta_2 + \cdots + q^{m_{s-1}} \beta_{s},t_0) = \left.z(\beta_1,\dots,\beta_s,t_0)\right|_{t_1 = t_1, t_2 = t_1^{q^{m_1}}, \dots, t_s = t_1^{q^{m_{s-1}}}}\]
equals $\phi(\beta_1,\dots,\beta_s)$, giving the lower bound.
\end{proof}

\begin{remark}[Logarithmic growth]
The result stated above gives precise meaning to the statement \textit{the degree in $t_0$ of the special polynomials $z(\beta_1,\dots,\beta_s,t_0)$ grows logarithmically in the $\beta_i$.} An important special case, relating to special values of Goss-Dirichlet $L$-functions, is given when one fixes the parameters $\beta_1,\dots,\beta_{s-1}$ and looks at the growth of degrees as $\beta_s$ tends to infinity. Logarithmic growth of this type for the special polynomials associated to $L$-series of $\tau$-sheaves was proved in great generality by B\"ockle, see \cite{Bo02}. Goss used this growth to give analytic continuation to these series using non-Archimedean measure theory. We do not pursue this direction here, but direct the interested reader to \cite{Gosstau}.
\end{remark}

\subsection{The action of Goss' group}\label{gossgp}

Recall from the introduction the definition of Goss' group $S_{(q)}$. We wish to show now that the degrees in $t_0$ of the special polynomials $z(\beta_1,\dots,\beta_s,t_0)$ are invariant under the natural action of $\rho^* := (\rho_1^*,\dots,\rho_s^*) \in (S_{(q)})^s$ given by
\begin{equation}\label{sqaction}
\rho^*(z(\beta_1,\dots,\beta_s,t_0)) := z(\rho_1^*\beta_1,\dots, \rho_s^*\beta_s,t_0).
\end{equation}

\begin{lemma}
Let $\rho^* \in S_{(q)}$ and $k$ a non-negative integer. Then for all $i \geq 0$,
\[l(p^i k) = l(p^i\rho^*(k)).\]
\begin{proof}
Let $e = \log_p(q)$. It suffices to prove this for $i = 0,1,\dots,e-1$, so let $i$ be one such number in this range. Write $k = \sum k_l q^l$ in base $q$.

Clearly for each $l \geq 0$ we may write $p^i k_l = a_l + b_l q$ with $p^i \leq a_l < q$ or $a_l = 0$, and $0 \leq b_l < p^i$. Hence for all $j,k \geq 0$ there is no base $q$ carry over in the sums $a_j+b_k$. Thus $p^i k = \sum_{l \geq 0}a_l q^l + \sum_{l \geq 0} b_l q^{l+1}$ is the sum of two positive integers written in base $q$, and there is no carry over in this decomposition. Hence $l(p^i k) = \sum_{l \geq 0} (a_l+b_l)$. Similarly, $p^i\rho^*(k) = \sum_{l \geq 0} a_l q^{\rho(l)} + \sum_{l \geq 0}b_l q^{\rho(l)+1}$ is the sum of two positive integers written in base $q$, and there is no carry-over in this decomposition. Hence $l(p^i\rho^*(k)) = \sum_{l \geq 0}(a_l + b_l) = l_q(p^i k)$.
\end{proof}
\end{lemma}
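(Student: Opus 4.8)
The plan is to reduce to the range $0 \le i \le e-1$, where $e := \log_p(q)$, and then to exhibit the base-$q$ expansions of $p^i k$ and $p^i \rho^*(k)$ explicitly enough to read off their digit sums and see that they agree. First I would record that multiplication by $q$ merely shifts base-$q$ digits, so $l(q m) = l(m)$ for every $m \ge 0$; since $p^{i+e} = q\,p^i$, this gives $l(p^i k) = l(p^{i+e} k)$ and $l(p^i \rho^*(k)) = l(p^{i+e}\rho^*(k))$, so it is enough to treat $i \in \{0,1,\dots,e-1\}$. Fix such an $i$ and write $k = \sum_{l \ge 0} k_l q^l$ in base $q$.

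The key step is a sharpened description of the (at most) two-digit number $p^i k_l$. Writing $k_l = \sum_{j=0}^{e-1} c_{l,j} p^j$ in base $p$ and multiplying through by $p^i$, one finds $p^i k_l = a_l + b_l q$ with $a_l = p^i \sum_{m=0}^{e-1-i} c_{l,m}\, p^m$ and $b_l = \sum_{m=0}^{i-1} c_{l,\,m+e-i}\, p^m$. Thus $a_l$ is a multiple of $p^i$ lying in $[0,q)$, so in fact $0 \le a_l \le q - p^i$, while $0 \le b_l \le p^i - 1$; the gain over the naive bound $a_l < q$ is that now $a_j + b_k \le (q - p^i) + (p^i - 1) = q - 1$ for all indices $j,k$. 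Consequently, in $p^i k = \sum_{l\ge 0} a_l q^l + \sum_{l \ge 0} b_l q^{l+1}$ the base-$q$ digit at position $l$ is $a_l + b_{l-1} \le q-1$ (with the convention $b_{-1} := 0$), so no carry-over occurs and $l(p^i k) = \sum_{l \ge 0}(a_l + b_l)$.

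Finally I would run the same bookkeeping for $p^i \rho^*(k) = \sum_{l \ge 0} a_l q^{\rho(l)} + \sum_{l \ge 0} b_l q^{\rho(l)+1}$. Because $\rho$ is a bijection of the non-negative integers, the exponents $\rho(l)$ are pairwise distinct and so are the $\rho(l)+1$, so each base-$q$ position $m$ receives at most one term of the form $a_j$ (namely $a_{\rho^{-1}(m)}$) and, for $m \ge 1$, at most one term of the form $b_k$ (namely $b_{\rho^{-1}(m-1)}$), their sum being again $\le q-1$; hence this decomposition is carry-free as well, and $l(p^i \rho^*(k)) = \sum_{l \ge 0}(a_l + b_l) = l(p^i k)$, which is what we want. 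The only real obstacle is this no-carry-over claim: one cannot get away with $a_l < q$ alone, and must use that $a_l$ is divisible by $p^i$ in order to secure $a_j + b_k \le q-1$; everything else is routine base-$q$ bookkeeping, and the argument for $p^i\rho^*(k)$ is formally identical to that for $p^i k$ once the distinctness of the exponents is noted.
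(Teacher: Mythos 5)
Your proof is correct and follows essentially the same route as the paper's: reduce to $0 \le i \le e-1$, split each digit as $p^i k_l = a_l + b_l q$, and observe that both resulting decompositions of $p^i k$ and $p^i\rho^*(k)$ are carry-free. If anything you are more careful than the paper at the one delicate point, since you make explicit that $a_l$ is a multiple of $p^i$ and hence $a_l \le q - p^i$, which is exactly what is needed to get $a_j + b_k \le q-1$ (the paper's stated bound $a_l < q$ alone would not suffice).
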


\begin{theorem}\label{invgen}
The degree in $t_0$ of $z(\beta_1,\dots,\beta_s,t_0)$ is invariant under the action of $(S_{(q)})^s$ given in \eqref{sqaction}.
\begin{proof}
This is an immediate consequence of the previous result and Theorem \ref{thm:exdegsp}.
\end{proof}
\end{theorem}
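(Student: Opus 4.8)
The plan is to read off the invariance directly from the exact-degree formula of Theorem~\ref{thm:exdegsp} together with the digit-permutation lemma proved just above it. First I would unwind the definition \eqref{sqaction} of the action: for $\rho^* = (\rho_1^*,\dots,\rho_s^*) \in (S_{(q)})^s$ we have $\rho^*(z(\beta_1,\dots,\beta_s,t_0)) = z(\rho_1^*\beta_1,\dots,\rho_s^*\beta_s,t_0)$, so the assertion reduces to the equality of the degrees in $t_0$ of $z(\beta_1,\dots,\beta_s,t_0)$ and $z(\rho_1^*\beta_1,\dots,\rho_s^*\beta_s,t_0)$.

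Next I would apply Theorem~\ref{thm:exdegsp} to each of these two tuples, which turns the problem into the purely combinatorial identity $\phi(\rho_1^*\beta_1,\dots,\rho_s^*\beta_s) = \phi(\beta_1,\dots,\beta_s)$. At this point it is worth observing that $\phi$ depends on the quantities $l(p^i\beta_j)$ only through their sum over $j$ (inside a floor) followed by a minimum over $i\geq 0$; in particular it is harmless that the $\rho_j^*$ may be distinct permutations in distinct coordinates, since the needed input is coordinatewise.

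The key step is then the preceding lemma: $l(p^i\rho_j^*\beta_j) = l(p^i\beta_j)$ for every $i \geq 0$ and every $j$. Summing this over $j = 1,\dots,s$ shows that for each fixed $i$ the numerator $l(p^i\rho_1^*\beta_1)+\cdots+l(p^i\rho_s^*\beta_s)$ equals $l(p^i\beta_1)+\cdots+l(p^i\beta_s)$; hence each floor term agrees, and therefore so do the two minima over $i$. This yields $\phi(\rho_1^*\beta_1,\dots,\rho_s^*\beta_s) = \phi(\beta_1,\dots,\beta_s)$ and completes the argument.

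I do not expect a genuine obstacle here: all of the substance is already packaged in Theorem~\ref{thm:exdegsp}, whose lower bound rests on the B\"ockle--Sheats theorem, and in the coordinatewise behaviour of $l(p^i\,\cdot\,)$ under digit permutations. The only point needing a moment's care is making explicit that the $(S_{(q)})^s$-action is coordinatewise and that $\phi$ is built symmetrically enough from the separate coordinate contributions for the lemma to apply termwise; once that is noted, the proof is a two-line verification.
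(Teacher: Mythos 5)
Your argument is correct and is exactly the paper's proof, merely written out in full: the paper cites the lemma $l(p^i k) = l(p^i\rho^*(k))$ together with Theorem \ref{thm:exdegsp} and calls the result immediate, and your coordinatewise summation of the lemma inside the formula for $\phi$ is precisely the verification being left implicit. No gaps and no difference in approach.
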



\section{Trivial zeros for multivariate special polynomials}
As Theorem \ref{Gossthm} above implies, the special polynomials $z(\beta,t_0)$ have simple ``trivial zeros'' (i.e. simple zeros at $t_0=1$) for all positive integers $\beta$ such that $\beta \equiv 0 \mod (q-1)$, and no zero at $t_0=1$ when $\beta \not\equiv 0 \mod (q-1)$.
Theorem \ref{thmvanchar}, just below, generalizes this, following easily from the next lemma and the specializations in the proof of Theorem \ref{thm:exdegsp} above. See Section \ref{APTZsec} below for an alternate proof of the next lemma using results of Angl\`es-Pellarin in \cite{AngPel1}.



\begin{lemma} \label{spzero}
Suppose $s$ is a positive integer divisible by $q-1$. Then $z(1,\overset{s}{\dots},1,t_0)$ vanishes at $t_0 = 1$.
\begin{proof}
The proof proceeds by induction on $s$ divisible by $q-1$ using the recursive formula from \eqref{sprecursion}.

First suppose $s = q-1$. Then as the recursive formula above shows we have
\[z(1,\overset{s}{\dots},1,t_0) = 1 - t_0.\]
Thus we have a simple zero at $t_0 = 1$.

Now suppose $s > q-1$ is divisible by $q-1$. As the recursive formula shows, apart from $z(1,\overset{0}{\dots},1,t_0) = 1$, only special polynomials occur whose number of indeterminates is positive, divisible by $q-1$ and less than $s$. By our inductive hypothesis, these all vanish at $t_0 = 1$. Collecting these together and calling them $g(t_0)$ we see that
\[z(1,\overset{s}{\dots},1,t_0) = 1 - t_0z(1,\overset{0}{\dots},1,t_0) - g(t_0),\]
with $g(t_0) \in A[t_1,\dots,t_s][t_0]$ vanishing at $t_0 = 1$.
\end{proof}
\end{lemma}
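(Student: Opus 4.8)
The plan is to induct on $s$ over positive multiples of $q-1$, feeding the recursive formula of Proposition \ref{sprecursion} into itself. The core observation is that whenever an exponent $1 - i_l$ equals zero the factor $\chi_l(a)^0 = 1$ is trivial, so the variable $t_l$ disappears and $z(1-i_1,\dots,1-i_s,t_0)$ collapses to $z(1,\overset{k}{\dots},1,t_0)$ in the $k := s - \sum_l i_l$ surviving variables. Thus the recursion never leaves the world of all-ones special polynomials; it merely lowers the number of ones, and the whole argument stays inside the class to which the induction applies.

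First I would dispatch the base case $s = q-1$. Here the summation index $\sum_l i_l$ ranges in $\{0,1,\dots,q-1\}$, and the only value that is a nonzero multiple of $q-1$ is $q-1$ itself, forcing $i_1 = \cdots = i_s = 1$. The associated monomial $t_1^0 \cdots t_s^0$ is trivial and the surviving polynomial is $z(1,\overset{0}{\dots},1,t_0) = 1$, so the recursion yields $z(1,\overset{q-1}{\dots},1,t_0) = 1 - t_0$, which vanishes at $t_0 = 1$.

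For the inductive step, with $s > q-1$ a multiple of $q-1$, I would split the sum in the recursion according to $k = s - \sum_l i_l$. Writing $w = \sum_l i_l$ for the weight of a tuple, the constraint $w \equiv 0 \pmod{q-1}$ together with $s \equiv 0 \pmod{q-1}$ forces every surviving count $k = s - w$ to be a multiple of $q-1$; moreover $w \neq 0$ gives $k < s$, while $w \leq s$ gives $k \geq 0$. The unique term with $k = 0$ arises from the all-ones tuple, contributing exactly $-t_0 \cdot z(1,\overset{0}{\dots},1,t_0) = -t_0$. All remaining terms have $0 < k < s$ with $k$ a positive multiple of $q-1$, so by the induction hypothesis each of their special-polynomial factors $z(1,\overset{k}{\dots},1,t_0)$ vanishes at $t_0 = 1$. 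Collecting these remaining terms as $g(t_0)$ (a sum of monomials in the $t_l$ times $t_0$ times these smaller special polynomials), I would write $z(1,\overset{s}{\dots},1,t_0) = 1 - t_0\, z(1,\overset{0}{\dots},1,t_0) - g(t_0)$ and evaluate at $t_0 = 1$: the first two terms give $1 - 1 = 0$, and $g(1) = 0$ because each factored special polynomial vanishes there.

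The only genuinely delicate point, and the one I would state most carefully, is the bookkeeping guaranteeing $k \equiv 0 \pmod{q-1}$ for every term surviving the recursion, which is precisely what keeps the induction inside the range covered by the hypothesis, together with the verification that the lone $k = 0$ term supplies exactly the $-t_0$ needed to cancel the leading $1$ at $t_0 = 1$. Everything else is the routine substitution of the induction hypothesis, and I would not belabor it.
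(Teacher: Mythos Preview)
Your proposal is correct and follows essentially the same approach as the paper's proof: induction on positive multiples of $q-1$ via the recursion of Proposition~\ref{sprecursion}, isolating the all-ones tuple as the $-t_0$ term and using the induction hypothesis on the remaining terms. Your version is in fact more explicit than the paper's, as you spell out why the surviving counts $k = s - w$ are multiples of $q-1$ (the paper simply asserts this) and why the base case reduces to the single all-ones tuple.
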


Now we may easily prove the main result of this section. The zeros in the next theorem will be referred to as the \textit{trivial zeros} for our multivariate special polynomials.

\begin{theorem}\label{thmvanchar}
Suppose $\beta_1,\dots,\beta_s$ are positive integers such that $\beta_1+\cdots+\beta_s \equiv 0 \mod (q-1)$, then the special polynomial $z(\beta_1,\dots,\beta_s,t_0)$ has a simple zero at $t_0 = 1$. If $\beta_1,\dots,\beta_s$ are positive integers such that $\beta_1+\cdots+\beta_s \not\equiv 0 \mod (q-1)$, then $z(\beta_1,\dots,\beta_s,t_0)$ does not vanish at $t_0=1$.
\begin{proof}
Let $\beta_1,\dots,\beta_s$ be positive integers such that $\beta_1+\cdots+\beta_s \equiv 0 \mod (q-1)$. As we have argued in the proof of Theorem \ref{thm:exdegsp}, the polynomial $z(1,\overset{l(\beta_1)+\cdots+l(\beta_s)}{\dots},1,t_0)$ may be specialized in the variables $t_i$ to the polynomial $z(\beta_1,\dots,\beta_s,t_0)$. Thus as
\[\beta_1+\cdots+\beta_s \equiv l(\beta_1)+\cdots + l(\beta_s) \mod (q-1)\]
 and evaluation is a ring homomorphism, we obtain a zero at $t_0 = 1$ for the latter polynomial from the former through Lemma \ref{spzero} via specialization.

To see that this zero is simple, we argue by contradiction. Suppose for $\beta_1,\dots,\beta_s$, as above that the zero of $z(\beta_1,\dots,\beta_s,t_0)$ at $t_0 = 1$ is not simple. Specializing $z(\beta_1,\dots,\beta_s,t_0)$ at $t_i = t_1$ for $i = 1,\dots,s$, we obtain a non-simple zero of $z(\beta_1+\cdots+\beta_s,t_0)$, contradicting Theorem \ref{Gossthm}.

Now let $\beta_1,\dots,\beta_s$ be such that $\beta_1+\cdots+\beta_s \not\equiv 0 \mod (q-1)$. To see that we do not have a zero of $z(\beta_1,\dots,\beta_s,t_0)$ at $t_0 = 1$, we argue again by contradiction. Assume $z(\beta_1,\dots,\beta_s,t_0)$ vanishes at $t_0 = 1$. Specializing at $t_i = t_1$ for $i = 1,\dots,s$ we obtain a zero of $z(\beta_1+\cdots+\beta_s,t_0)$, again contradicting Theorem \ref{Gossthm}.
\end{proof}
\end{theorem}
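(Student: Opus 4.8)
The goal is Theorem \ref{thmvanchar}, which asserts that $z(\beta_1,\dots,\beta_s,t_0)$ has a simple zero at $t_0=1$ precisely when $\beta_1+\cdots+\beta_s \equiv 0 \bmod (q-1)$, and no zero there otherwise. The plan is to reduce everything, by specialization of the $t_i$, to statements already in hand: Lemma \ref{spzero} (which handles the all-ones case $z(1,\overset{s}{\dots},1,t_0)$ when $q-1 \mid s$) and Goss' Theorem \ref{Gossthm} (which handles the one-variable case $z(\beta,t_0)$ completely, including simplicity).

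First I would treat the case $\beta_1+\cdots+\beta_s \equiv 0 \bmod (q-1)$ and prove mere vanishing at $t_0=1$. Here I recall from the proof of Theorem \ref{thm:exdegsp} that $z(\beta_1,\dots,\beta_s,t_0)$ is obtained from $z(1,\overset{N}{\dots},1,t_0)$, with $N = l(\beta_1)+\cdots+l(\beta_s)$, by a suitable substitution of monomials in a single variable for the $t_i$. Since $l(\beta) \equiv \beta \bmod (q-1)$ for every positive integer $\beta$ (each power $q^i \equiv 1$), we have $N \equiv \beta_1+\cdots+\beta_s \equiv 0 \bmod (q-1)$, so Lemma \ref{spzero} applies to $z(1,\overset{N}{\dots},1,t_0)$; as specialization is a ring homomorphism sending $t_0=1$ to $t_0=1$, the vanishing descends to $z(\beta_1,\dots,\beta_s,t_0)$.

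Next, for simplicity of that zero, I would argue by contradiction: a non-simple zero at $t_0=1$ is preserved under the further specialization $t_i \mapsto t_1$ for all $i$, which carries $z(\beta_1,\dots,\beta_s,t_0)$ to $z(\beta_1+\cdots+\beta_s,t_0)$ (directly from the defining sum, since $\chi_i(a)|_{t_i=t_1} = \chi_1(a)$). Since $\beta_1+\cdots+\beta_s \equiv 0 \bmod (q-1)$, Theorem \ref{Gossthm} says this one-variable polynomial has a \emph{simple} zero at $t_0=1$, contradicting the supposed non-simplicity. Finally, for the case $\beta_1+\cdots+\beta_s \not\equiv 0 \bmod (q-1)$, the same specialization $t_i\mapsto t_1$ turns a hypothetical zero into a zero of $z(\beta_1+\cdots+\beta_s,t_0)$ at $t_0=1$, which Theorem \ref{Gossthm} forbids.

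The only delicate point — and the main thing to get right rather than a genuine obstacle — is ensuring that specialization can only \emph{preserve or create} simplicity issues in the way the argument needs: a non-simple zero upstairs must map to a zero (possibly of higher order, but in any case of order $\geq 2$ if the evaluation of the derivative still vanishes) downstairs. Concretely, one checks that if $(t_0-1)^2 \mid z(\beta_1,\dots,\beta_s,t_0)$ in $\FF_q[t_1,\dots,t_s][t_0]$, then applying the $\FF_q$-algebra homomorphism $t_i\mapsto t_1$ gives $(t_0-1)^2 \mid z(\beta_1+\cdots+\beta_s,t_0)$ in $\FF_q[t_1][t_0]$, so the zero downstairs is non-simple; this is immediate since the homomorphism fixes $t_0$ and hence $(t_0-1)$. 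Everything else is bookkeeping with the congruence $l(\beta)\equiv\beta\bmod(q-1)$ and the fact, already recorded in the proof of Theorem \ref{thm:exdegsp}, that the relevant specializations exist.
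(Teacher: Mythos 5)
Your proposal is correct and follows essentially the same route as the paper's own proof: vanishing via specialization from $z(1,\overset{N}{\dots},1,t_0)$ with $N=l(\beta_1)+\cdots+l(\beta_s)$ together with Lemma \ref{spzero}, and both simplicity and non-vanishing by contradiction after the specialization $t_i\mapsto t_1$ combined with Theorem \ref{Gossthm}. Your extra remark that $(t_0-1)^2$-divisibility is preserved under the $\FF_q$-algebra homomorphism fixing $t_0$ is a useful explicit justification of a step the paper leaves implicit.
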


\subsection{Existence of zeros for special polynomials of Goss-Dirichlet $L$-functions}
Theorem \ref{thmvanchar} implies the existence of zeros at $t_0=1$ for special polynomials associated to Goss-Dirichlet $L$-functions (and, similarly, for the negative special values of $L$-series associated to Drinfeld modules over Tate algebras) in the following manner. Let $\beta_1,\dots,\beta_s$ be positive integers, and let $\lambda_1,\dots,\lambda_s \in \overline\FF_q$. We obtain a Dirichlet character $\chi : A \rightarrow \overline\FF_q$ through evaluation via the definition
\[\chi(a) := a(\lambda_1)^{\beta_1}\cdots a(\lambda_s)^{\beta_s},\]
for all $a \in A$.
Thus for non-negative integers $\beta$,
\[z(\chi,\beta,t_0) := \left.\sum_{d \geq 0} t_0^d \sum_{a \in A_+(d)} \chi_1(a)^{\beta_1}\cdots\chi_s(a)^{\beta_s}a^\beta\right|_{\{t_i = \lambda_i\}_{i = 1}^s}\]
is the $\beta$-th special polynomial associated to the $L$-function for $\chi$, and Theorem \ref{thmvanchar} implies that this polynomial has a zero at $t_0 = 1$ whenever $\beta_1+\cdots+\beta_s+\beta \equiv 0 \mod (q-1)$.

Questions of how the degree in $t_0$ of $z(\beta_1,\dots,\beta_s,\beta,t_0)|_{t_{s+1} = \theta}$ can be affected and how non-simple zeros at $t_0 = 1$ can be introduced upon specializing $t_1,\dots,t_s \in \overline\FF_q$ (or in the algebraic closure of $\FF_q(t_1,\dots,t_s)$ as in \cite{APTR14}) are interesting and need further exploration.

\subsection{Angl\`es-Pellarin implies trivial zeros}\label{APTZsec}
Now we describe how Theorem 4 in \cite{AngPel1} gives rise to the trivial zeros of our multivariate special polynomials and implies Proposition \ref{spzero} above. Thus, as was mentioned, the poles of the ``gamma factor'' $\omega$ give trivial zeros of $L$-series in positive characteristic in analogy with classical complex valued $L$-series.

\begin{proof}[Alternate proof of Theorem \ref{spzero}]
Observe that $z(1,\overset{s}{\dots},1,t_0)|_{t_0=1}$ may be obtained from
\[L(\chi_1\cdots\chi_{s+1},1) := \sum_{a \in A_+}\frac{\chi_1(a)\cdots\chi_s(a)\chi_{s+1}(a)}{a}\]
through specialization of the last variable $t_{s+1}$ at $\theta$, as follows from the entireness in the $t_i$ of the $L$-series, defined just above, that was proved in \cite[Prop. 32]{AngPel1}. From Theorem 4 of \cite{AngPel1} we know that taking $s \equiv 0 \mod (q-1)$ we have
\[ \pitilde^{-1} L(\chi_1\cdots\chi_{s+1},1) \omega(t_1)\cdots\omega(t_{s+1}) \prod_{i = 1}^{s+1}\left( 1 - \frac{t_i}{\theta}  \right) \in K[t_1,\dots,t_{s+1}],\]
where the $\omega(t)$ is the Anderson-Thakur function, unique up to a choice of $(q-1)$-th root of $-\theta$ that we fix now, and $\pitilde$ is the fundamental period of the Carlitz module, defined by the equality $\lim_{t \rightarrow \theta}(\theta - t)\omega(t) = \pitilde$. (Note that taking $\delta = 1$ in \cite[Theorem 4]{AngPel1} works in this case.)  In the limit $t_{s+1} \rightarrow \theta$ we learn that
\[\left(z(1, \overset{s}{\dots}, 1,t_0)|_{t_0=1} \right) \omega(t_1) \cdots \omega(t_{s}) \prod_{i = 1}^s\left( 1 - \frac{t_i}{\theta}  \right) \in K[t_1,\dots,t_{s}].\]
But $z(1,\overset{s}{\dots},1,t_0)|_{t_0=1} \in \FF_q[t_1,\dots,t_s]$, and the line above implies that this polynomial must have infinitely many zeros in just the $t_1$ coordinate, for example. Hence it must be identically zero.
\end{proof}

\noindent \textit{Acknowledgments.} Our thanks go to David Goss whose support and interest fueled this work and to both David and Federico Pellarin whose comments have increased the value of this note. We also thank the referee for several beneficial remarks.

\end{document}